\newcommand{\Zz}{\mathbb{Z}}
\newcommand{\Pp}{\mathbb{P}}
\newcommand{\Rr}{\mathbb{R}}
\newcommand{\Qq}{\mathbb{Q}}
\newcommand{\Nklt}{\operatorname{Nklt}}
\newcommand{\red}{\operatorname{red}}
\newcommand{\Supp}{\operatorname{Supp}}
\newcommand{\Diff}{\operatorname{Diff}}
\newtheorem{theorem}{Theorem}[section]
\newtheorem{lemma}[theorem]{Lemma}
\newtheorem{definition}[theorem]{Definition}
\newtheorem{corollary}[theorem]{Corollary}
\newtheorem{conjecture}[theorem]{Conjecture}
\begin{document}

\title{On a connectedness principle of Shokurov-Koll\'{a}r type}

\subjclass[2010]{14E30}
\keywords{minimal model program, non-klt locus, connectedness }

\begin{abstract}
Let $(X,\Delta)$ be a log pair over $S$, such that $-(K_X+\Delta)$ is nef over $S$. It is conjectured that  the intersection of the non-klt (non Kawamata log terminal) locus of $(X,\Delta)$ with any fiber $X_s$ has at most two connected components. We prove this conjecture in dimension $\leq 4$ and in arbitrary dimension assuming the termination of klt flips.
\end{abstract}
\author{Christopher D. Hacon}
\address{Department of Mathematics, University of Utah, 155 South 1400 East, JWB 301, Salt Lake City, Utah 84112, USA}
\email{hacon@math.utah.edu}

\author{Jingjun Han}
\address{Beijing International Center for Mathematical Research, Peking University, Beijing 100871, China}
\address{Current address: Department of Mathematics, Johns Hopkins University, Baltimore, MD {\rm 21218}, USA}
\email{hanjingjun@pku.edu.cn}

\maketitle


\section{Introduction}\label{sec: introduction}

We work over an algebraically closed field $k$ of characteristic zero. 

Let $\pi: X\to S$ be a proper morphism with connected fibers, and $(X,\Delta)$ a log pair. It is quite useful to understand the connected components of $\Nklt(X,\Delta)$, the non-klt locus of $(X,\Delta)$, and more generally, the connected components of the fibers of $\pi:\Nklt(X,\Delta)\to S$. When $-(K_X+\Delta)$ is $\pi$-nef and $\pi$-big, by the Shokurov-Koll\'{a}r connectedness lemma, any fiber of $\pi|_{\Nklt(X,\Delta)}:\Nklt(X,\Delta)\to S$ is connected. The connectedness lemma has many applications, including the proof of inversion of adjunction.

When $-(K_X+\Delta)$ is $\pi$-nef, the fibers of $\pi|_{\Nklt(X,\Delta)}:\Nklt(X,\Delta)\to S$ are not necessarily connected. This is the case for $X=\Pp ^1$ and $\Delta$  the union of two distinct points. We expect the following conjecture to be true.

\begin{conjecture}\label{conj:1}
	Let $(X,\Delta)$ be a log pair, and $\pi: X\to S$ a proper morphism with connected fibers, such that $-(K_X+\Delta)$ is $\pi$-nef. For any point $s\in S$, $\pi^{-1}(s)\cap \Nklt(X,\Delta)$ has at most two connected components.
\end{conjecture}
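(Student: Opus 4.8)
The plan is to argue by induction on $\dim X$, reducing to two extreme positions for $-(K_X+\Delta)$. If $-(K_X+\Delta)$ is $\pi$-big, the classical Shokurov--Koll\'ar connectedness lemma applies and $\pi^{-1}(s)\cap\Nklt(X,\Delta)$ is in fact \emph{connected}; the genuinely new case, responsible for the bound being $2$ rather than $1$, is when $-(K_X+\Delta)$ is nef but not big, modelled on $(\Pp^1,\{0\}+\{\infty\})$.

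First I would make the standard reductions. Since the statement concerns a single fiber I may localize and replace $S$ by $\spec\Oo_{S,s}$. Passing to a $\Qq$-factorial dlt modification $f\colon(X',\Delta')\to(X,\Delta)$ with $K_{X'}+\Delta'=f^{*}(K_X+\Delta)$ preserves $\pi$-nefness of $-(K_{X'}+\Delta')$, and since $f$ is a proper birational contraction with connected fibers and $\Nklt(X',\Delta')=f^{-1}\Nklt(X,\Delta)$, it does not change the number of connected components of the non-klt locus over $s$. So I may assume $(X,\Delta)$ is $\Qq$-factorial dlt.

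The role of the minimal model program, and of the termination hypothesis, is to pass to a usable relative structure. Running a suitable $(K_X+\Delta)$-MMP over $S$, I expect to reach either a model on which $-(K_X+\Delta)$ is $\pi$-big (where Shokurov--Koll\'ar finishes the argument) or a contraction $g\colon X\to Z$ over $S$ with $\dim Z<\dim X$ along whose fibers $-(K_X+\Delta)$ is numerically trivial. Establishing this dichotomy is essentially a relative abundance/semiampleness statement for the nef divisor $-(K_X+\Delta)$, which is where termination of klt flips is consumed and hence where the restriction to $\dim X\le 4$ enters unconditionally. The step I expect to be the main obstacle is orthogonal to termination: I must show that none of the birational operations performed increases the number of connected components of $\Nklt(\cdot)\cap X_s$. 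For each divisorial contraction and flip I would examine its (connected) exceptional, resp.\ flipping, locus and apply the Shokurov--Koll\'ar lemma to the extremal contraction itself, forcing the non-klt locus to meet each contracted fiber in a connected set, so that components can only merge and never split.

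Finally, on the fibration $g\colon X\to Z$ I would apply the canonical bundle formula $K_X+\Delta\sim_{\Qq}g^{*}(K_Z+\Delta_Z+M_Z)$, in which $-(K_Z+\Delta_Z+M_Z)$ is nef and big over $S$; the Shokurov--Koll\'ar lemma on $Z$ then shows the induced non-klt locus on $Z_s$ is connected. Along a general fiber $F$ of $g$ the restriction $-(K_X+\Delta)|_F\equiv 0$ is precisely the numerically trivial case in strictly smaller dimension, governed by the inductive hypothesis (at most two components), whose base case is a curve, where $\deg\big(-(K_F+\Delta_F)\big)\ge 0$ leaves room for at most two reduced points. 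It then remains to assemble this fiberwise information over the connected base $Z_s$: the two components may persist as a (possibly monodromy-twisted) pair of multisections, but no third can appear, which yields the bound $2$ and recovers its sharpness from $(\Pp^1,\{0\}+\{\infty\})$.
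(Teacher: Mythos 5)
Your outline shares the paper's skeleton (dlt modification, an MMP ending in a fibration, Shokurov--Koll\'ar in the big case, a $\Pp^1$-link type statement in the numerically trivial case), but the engine you propose --- a $(K_X+\Delta)$-MMP --- fails, and the paper is constructed precisely to avoid it. First, $\pi$-nefness of $-(K_X+\Delta)$ is destroyed at the first flip: by definition the flipped curves are $(K_{X^+}+\Delta^+)$-positive, so $-(K_{X^+}+\Delta^+)$ is negative on them, and your standing hypothesis --- needed for the terminal dichotomy and for every later application of Shokurov--Koll\'ar --- is gone. Second, after the dlt modification the pair is dlt but not klt, so the assumption ``termination of klt flips'' (and, in dimension $\le 4$, Theorem \ref{AHK0631}) says nothing about this MMP. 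Third, the dichotomy you want --- either $-(K_X+\Delta)$ becomes $\pi$-big, or there is a fibration along whose fibers it is numerically trivial --- is in essence relative semiampleness of $-(K_X+\Delta)$, which fails: Gongyo's example \cite{Gongyo12}, recalled in the introduction of the paper, is a plt pair with $-(K_X+\Delta)$ nef and big but not semiample. (A smaller point: your identity $\Nklt(X',\Delta')=f^{-1}\Nklt(X,\Delta)$ for a crepant pullback is false in general; what holds, via Lemma \ref{lem:crepant}, is a bijection of connected components.)

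The most concrete gap is directional. Write $N_i$ for the number of connected components of the non-klt locus over $s$ on the $i$-th model of your MMP. Your mechanism (Shokurov--Koll\'ar applied to each extremal contraction) shows that components never split, i.e.\ $N_{i+1}\le N_i$; moreover, under $(K+\Delta)$-negative steps the count can genuinely drop, because such steps strictly increase log discrepancies along the contracted or flipped locus, so an entire non-klt component can be contracted and become klt (e.g.\ a $(-1)$-curve of coefficient one on a surface, disjoint from the rest of the boundary). Hence at the end you know $N_k\le 2$ and $N_k\le N_0$ --- which says nothing about $N_0$. What the proof needs is the reverse inequality (equivalently, in the contradiction setup, that disconnectedness persists to the final model), i.e.\ a bijection of components at every step. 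This is exactly what the paper's ``special MMP'' delivers: it runs a \emph{klt} $(K_{X_1}+B_1)$-MMP with $B_1=\{\Delta'\}$, contracting only extremal rays on which the nef divisor $P_1$ is trivial (such rays exist by Lemma \ref{lem:extremal} together with the non-nefness of $K_{X_1}+B_1+\beta_1 P_1$). Since $K_{X_1}+B_1+C_1=-P_1$, every step is crepant for the full pair, so Lemma \ref{lem:crepant} gives a bijection of non-klt components; nefness of $P_1$ survives by the cone theorem; termination of klt flips applies, and the steps are $C$-positive so that Theorem \ref{AHK0631} applies in dimension four; finally, the Mori fiber space structure (relative Picard number one) is what produces the two disjoint sections needed to invoke Theorem \ref{thm:shoconnkollar}, a feature your abundance-type fibration would not have. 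This choice of MMP is what the paper identifies as its main new input, and it is the idea your proposal is missing.
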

In dimension two, Conjecture \ref{conj:1} was proven by Prokhorov in \cite{Prokhorov01}. Assuming the termination of klt flips, Fujino showed Conjecture \ref{conj:1} for the case when $(X,\Delta)$ is dlt and $K_X+\Delta$ is numerically trivial, \cite{Fujino00}. In particular, he proved Conjecture \ref{conj:1} when $\dim X\le 3$. Koll\'{a}r and Kov\'{a}cs showed Conjecture \ref{conj:1} for the case that $K_X+\Delta$ is numerically trivial without any assumption on the termination of flips. They then applied this result in their study of lc (log canonical) singularities, \cite{kk10,KollarMMP13}.

When $(X,\Delta)$ is dlt (divisorial log terminal), one may wonder if the nefness of $-(K_X+\Delta)$ implies the semiampleness of $-(K_X+\Delta)$, and hence Conjecture \ref{conj:1}  would follow from the result in \cite{kk10}. However, this is not the case. Gongyo constructed an example such that $\dim X\ge3$, $(X,\Delta )$ is plt (purely log terminal), $-(K_X+\Delta)$ is big and nef, and $-(K_X+\Delta)$ is not semiample, \cite{Gongyo12}.

In this paper, assuming the termination of klt flips, we prove Conjecture \ref{conj:1}.
\begin{theorem}\label{thm:shoconn}
	Assume the termination of klt flips or that $\dim X\leq 4$. Let $(X,\Delta)$ be a log pair, and $\pi: X\to S$ a proper morphism, such that $-(K_X+\Delta)$ is $\pi$-nef. Fix a point $s\in S$, and assume that $\pi^{-1}(s)$ is connected but $\pi^{-1}(s)\cap \Nklt(X,\Delta)$ is disconnected (as $k(s)$-schemes). Then,
	
	\begin{enumerate}
		
		\item $(X,\Delta)$ is plt in a neighborhood of $\pi^{-1}(s)$, and
		\item $\pi^{-1}(s)\cap\Nklt(X,\Delta)$ has 2 connected components. 
	\end{enumerate}
\end{theorem}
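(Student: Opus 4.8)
The plan is to reduce, by a dlt modification and a relative minimal model program, to the numerically trivial case already handled by Fuj\-ino and by Koll\'ar--Kov\'acs, using the Shokurov--Koll\'ar lemma at every step to show that the number of connected components over $s$ is an invariant of the program. First I would replace $(X,\Delta)$ by a $\Qq$-factorial dlt modification $f\colon(Y,\Gamma)\to(X,\Delta)$ with $K_Y+\Gamma=f^*(K_X+\Delta)$, so that $-(K_Y+\Gamma)$ is $(\pi\circ f)$-nef, $\Nklt(Y,\Gamma)=\lfloor\Gamma\rfloor$, and $f$ maps $\lfloor\Gamma\rfloor\cap(\pi\circ f)^{-1}(s)$ onto $\Nklt(X,\Delta)\cap\pi^{-1}(s)$ with connected fibres; since $f$ is crepant, both the component count over $s$ and the plt property can be transported back to $(X,\Delta)$. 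After shrinking $S$ to a neighbourhood of $s$ I may assume $\lfloor\Gamma\rfloor$ is nonempty and that its fibre over $s$ is disconnected.

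The central observation is that every step of a $(K_Y+\Gamma)$-MMP over $S$ preserves the number of components of $\Nklt$ over $s$. Indeed, a $(K_Y+\Gamma)$-negative extremal ray $R$ satisfies $-(K_Y+\Gamma)\cdot R>0$, so the associated contraction $g\colon Y\to W$ has $-(K_Y+\Gamma)$ relatively nef and strictly positive on the contracted curves, hence $g$-ample and in particular $g$-big. The Shokurov--Koll\'ar connectedness lemma then applies to $g$, so the fibres of $\Nklt(Y,\Gamma)\to W$ are connected; a proper surjection with connected fibres induces a bijection on connected components over $s$, and the same holds for the two contractions to $W$ flanking a flip. Therefore I would run the $(K_Y+\Gamma)$-MMP over $S$ (as a dlt MMP, reducing termination to the assumed termination of klt flips exactly as in Fuj\-ino's treatment), knowing that the component count over $s$ stays equal to its initial value, which is at least $2$.

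The end model is either a minimal model or a Mori fibre space. In the minimal model case $K_Y+\Gamma$ becomes $\pi$-nef, while its negative remains $\pi$-pseudoeffective as the birational pushforward of the nef class $-(K_Y+\Gamma)$; since a nef divisor whose opposite is pseudoeffective is numerically trivial, the end model is numerically trivial over $S$, and I would invoke the Koll\'ar--Kov\'acs/Fuj\-ino results for $K+\Gamma\equiv 0$ to conclude that $\Nklt$ has at most two components over $s$ and that the pair is plt there. In the Mori fibre space case $\phi\colon Y'\to Z$ the class $-(K_{Y'}+\Gamma')$ is $\phi$-ample, so on each fibre the non-klt locus is connected by Shokurov--Koll\'ar; if $\lfloor\Gamma'\rfloor$ dominates the relevant part of $Z$ this forces a connected total non-klt locus over $s$, contradicting invariance of the count, and otherwise $\lfloor\Gamma'\rfloor$ is $\phi$-vertical and I would descend to $Z$ and close the argument by induction on dimension. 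Since the count is preserved throughout and $\leq 2$ on the end model, the original locus has exactly two components, and plt-ness pulls back through the MMP (using $g$-ampleness of $-(K+\Gamma)$ at each step) and through $f$.

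The main obstacle is the interface between the MMP and the non-klt locus: I must run the dlt MMP to termination (this is precisely where the hypothesis on klt flips, or $\dim X\le 4$, is used) while verifying that each flip and divisorial contraction genuinely induces a bijection on components and that the plt structure is neither created nor destroyed. A secondary difficulty is that $-(K+\Delta)$ need not be semiample, by Gongyo's example, so one cannot simply pass to its ample model; the numerical-triviality reduction must be obtained through the MMP itself, and the Mori fibre space case with $\phi$-vertical $\lfloor\Gamma'\rfloor$ requires a separate inductive analysis on the lower-dimensional base $Z$.
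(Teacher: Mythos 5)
Your central claim---that every step of a $(K_Y+\Gamma)$-MMP preserves the number of connected components of the non-klt locus over $s$---is false, and this breaks the whole reduction. The steps of that MMP are $(K_Y+\Gamma)$-\emph{negative}, hence not crepant: for a divisorial contraction $g\colon Y\to W$ of a $(K_Y+\Gamma)$-negative ray one has $K_Y+\Gamma=g^*(K_W+g_*\Gamma)+tE$ with $t>0$ by the negativity lemma, so discrepancies strictly increase, and a contracted component of $\lfloor\Gamma\rfloor$ can simply disappear from the non-klt locus. The Shokurov--Koll\'ar lemma applied to $g$ only identifies components of $\Nklt(Y,\Gamma)$ with components of its \emph{image} $g(\Nklt(Y,\Gamma))$, which may strictly contain $\Nklt(W,g_*\Gamma)$. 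Concretely: take $(\Pp^1\times\Pp^1,\{0\}\times\Pp^1+\{\infty\}\times\Pp^1)$ over $S=\spec k$ and blow up a point of $\{0\}\times\Pp^1$; the crepant pullback $(Y,\Gamma=D_0'+D_\infty')$ still satisfies all hypotheses ($-(K_Y+\Gamma)$ nef, two disjoint non-klt components), but $D_0'$ is now a $(K_Y+\Gamma)$-negative extremal $(-1)$-curve, and contracting it is a legitimate step of your dlt MMP after which the non-klt locus has a single component ($W$ is smooth and boundary-free near the image point). So the count can drop from $2$ to $1$, and your proposal gives no mechanism for choosing rays that avoid this. The flip case is also wrong as stated: on the flipped side $Y^+\to W$ the divisor $K_{Y^+}+\Gamma^+$ is relatively \emph{ample}, so $-(K_{Y^+}+\Gamma^+)$ is anti-ample over $W$ and the connectedness lemma does not apply to that contraction at all. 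Two further consequences of running a negative rather than crepant MMP: plt-ness of the end model cannot be pulled back (discrepancies only increase along the MMP, so conclusion (1) would not follow), and in dimension $\le 4$ the termination of a dlt $(K+\Gamma)$-MMP is open---Theorem \ref{AHK0631} requires the flips to be positive against a fixed effective divisor, which your steps need not be.

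This is exactly the difficulty the paper's ``special MMP'' is designed to circumvent, and it is the main new input of the paper. There one runs a \emph{klt} $(K_{X_1}+B_1)$-MMP ($B_1$ the fractional part of the boundary), but only contracts rays $R$ with $P_i\cdot R=0$, where $P_i\equiv -(K_{X_1}+B_1+(1+\mu_i)C)$; Lemma \ref{lem:extremal} guarantees such rays exist as long as $-C_i$ is not relatively nef (and if it were, the fiber over $s$ would lie in $\Supp C_i$, a contradiction). Triviality against $P_i$ makes every step crepant for the full pair $(X_1,B_1+C_1)$ by the negativity lemma, so Lemma \ref{lem:crepant} transports the component count and, at the end, plt-ness; it also preserves nefness of $P_i$, forces the MMP to end in a Mori fiber space rather than a minimal model (so your delicate ``nef plus anti-pseudoeffective'' reduction to the numerically trivial case is never needed), makes every step $C$-positive so that Theorem \ref{AHK0631} yields termination in dimension $4$, and at the Mori fiber space the dichotomy $\mu_n>0$ versus $\mu_n=0$ gives either a contradiction via Shokurov--Koll\'ar or the standard $\Pp^1$-link via Theorem \ref{thm:shoconnkollar}. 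Your outline, by contrast, uses the MMP whose steps genuinely destroy the structure you need to track.
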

In fact, we will prove a slightly stronger result in Theorem \ref{thm:shoconnstrong}.  In a private communication, M\textsuperscript{c}Kernan and Xu informed us that they had obtained a similar result in an unpublished work.

Here we give a rough idea of the proof. After taking a dlt modification, we may assume $(X,B+\red C)$ is dlt, where $B=\{\Delta\},C=\lfloor \Delta\rfloor$. We run a special kind of $(K_X+B)$-MMP and reach a Mori-fiber space $X\dashrightarrow Y/Z$. This MMP preserves the connected components of the non-klt locus. The theorem then follows by applying the Shokurov-Koll\'{a}r connectedness lemma and the result in  \cite{KollarMMP13}. 
When $\dim X\le 4$, we show that a certain klt MMP terminates by applying the main result of \cite{AHK07}.
We emphasize that our approach is closely based on the ideas of \cite{sh93} and \cite{kk10}, the main new input is the choice of the special MMP.

The proof of Theorem \ref{thm:shoconn} also implies the following corollary, which can be viewed as a stronger version of the connectedness lemma.
\begin{corollary}\label{cor:shoconn}
	Assume the termination of klt flips or that $\dim X\leq 4$. Let $(X,\Delta)$ be a log pair such that $(X,B+\red C)$ is dlt, where $B=\{\Delta\}$, and $C=\lfloor \Delta\rfloor$. Let $\pi: X\to S$ be a proper morphism such that $-(K_X+\Delta)$ is $\pi$-nef, and there exists $\epsilon>0$ such that $-(K_X+B+(1+\epsilon)C)$ is $\pi$-nef. Then, $\Nklt(X,\Delta)$ is connected in a neighbourhood of any fiber of $\pi$.
\end{corollary}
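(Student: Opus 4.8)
The plan is to derive Corollary~\ref{cor:shoconn} as a consequence of the machinery built to prove Theorem~\ref{thm:shoconn}, exploiting the extra nefness hypothesis on $-(K_X+B+(1+\epsilon)C)$ to rule out the two-component case. First I would reduce to the absolute statement by working in a neighborhood of a single fiber $\pi^{-1}(s)$ and passing to the local situation, so that it suffices to prove that $\pi^{-1}(s)\cap\Nklt(X,\Delta)$ is connected for every $s\in S$. Since $(X,B+\red C)$ is already dlt with $B=\{\Delta\}$ and $C=\lfloor\Delta\rfloor$, no dlt modification is needed, and I can invoke Theorem~\ref{thm:shoconn} directly: if the fiber intersection were disconnected, then the theorem forces $(X,\Delta)$ to be plt in a neighborhood of $\pi^{-1}(s)$ and the non-klt locus to have exactly two connected components.

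The key point is then to show that the plt, two-component scenario is incompatible with the hypothesis $-(K_X+B+(1+\epsilon)C)$ being $\pi$-nef. In the plt case we have $\Nklt(X,\Delta)=\lfloor\Delta\rfloor=C$, so $C$ would split into two disjoint nonempty pieces meeting $\pi^{-1}(s)$. The idea is that having two such components is too much positivity to coexist with the nefness of $-(K_X+B+(1+\epsilon)C)$: running the same special $(K_X+B)$-MMP used in Theorem~\ref{thm:shoconn}, which preserves the connected components of the non-klt locus and terminates with a Mori fiber space $Y/Z$, I would track the strict transform of $C$. On the Mori fiber space the two components of $C$ must both dominate the base or behave in a way constrained by the relative Picard number one, and the extra $\epsilon C$ of negativity of $K_Y+B_Y+(1+\epsilon)C_Y$ relative to $K_Y+B_Y+C_Y$ contradicts the output of the Shokurov--Koll\'{a}r analysis combined with the result of \cite{KollarMMP13} on the numerically trivial case. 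Concretely, the two disjoint components of the boundary cannot both be anti-nef-saturated once the coefficient of $C$ is pushed above $1$.

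The main obstacle I anticipate is making the MMP and the perturbed nefness interact cleanly: I must ensure that the special MMP for $K_X+B$ can be chosen compatibly with the positivity of $-(K_X+B+(1+\epsilon)C)$, so that the extra nefness survives (or is controlled) along every step of the program, and that the Mori fiber space contraction at the end actually detects the contradiction. A secondary issue is the behavior of $C$ under the flips and divisorial contractions: while the connectedness lemma guarantees the components of $\Nklt$ are tracked, I need the coefficients and the nef threshold $\epsilon$ to be preserved or to degrade in a controlled, still-strictly-positive manner. Provided these compatibility checks go through, the disconnected case is excluded, and hence $\Nklt(X,\Delta)$ must be connected in a neighborhood of $\pi^{-1}(s)$, completing the proof.
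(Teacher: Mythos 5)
Your overall strategy---re-entering the special MMP from the proof of Theorem \ref{thm:shoconnstrong} and using the extra nefness hypothesis to exclude the disconnected case---is the right one, but the proposal leaves the decisive step unproved and points at the wrong mechanism for the contradiction. The paper's proof is a two-line observation about the nef threshold: since $P-\mu C=-(K_{X_1}+B_1+(1+\mu)C)$, the hypothesis that $-(K_X+B+(1+\epsilon)C)$ is $\pi$-nef says \emph{exactly} that $\mu_1=\max\{\mu \mid P-\mu C \text{ is } \pi_1\text{-nef}\}\geq\epsilon>0$ (taking the dlt modification to be a small $\Qq$-factorialization, so this pulls back cleanly). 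The special MMP is constructed so that each contracted ray $R$ satisfies $P_i\cdot R=0$; by the cone theorem the pushforward of $P_i$ stays nef, so the thresholds are monotone, $\mu_1\leq\mu_2\leq\cdots\leq\mu_n$, whence $\mu_n\geq\epsilon>0$. In the proof of Theorem \ref{thm:shoconnstrong}, the branch $\mu_n>0$ already yields connectedness of the non-klt locus near every fiber, via ampleness of $-(K_Y+B_Y+\psi_*C)=P_Y+\mu_n\psi_*C$ over $Z$ and the ordinary Shokurov--Koll\'ar connectedness lemma. So under the corollary's hypothesis the numerically trivial endgame ($\mu_n=0$), Koll\'ar's Theorem \ref{thm:shoconnkollar}, and the $\Pp^1$-link structure never enter at all.

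By contrast, your proposal invokes Theorem \ref{thm:shoconn} to land in the plt/two-component case and then hopes to contradict the extra nefness ``combined with the result of \cite{KollarMMP13} on the numerically trivial case,'' with the key compatibility---that the extra nefness ``survives or is controlled'' along the MMP---explicitly deferred (``provided these compatibility checks go through''). That deferred check is precisely the mathematical content of the proof, and in the form you state it, it fails: nefness of $-(K+B+(1+\epsilon)C)$ for a \emph{fixed} $\epsilon<\mu_i$ is not preserved by a step of this MMP, because the contracted ray $R$ satisfies $-(K+B+(1+\epsilon)C)\cdot R=(\mu_i-\epsilon)\,C\cdot R>0$, and the pushforward of a divisor strictly positive on the contracted ray need not remain nef. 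What is preserved is nefness at the threshold value $\mu_i$ (since $P_i\cdot R=0$ by the choice of ray via Lemma \ref{lem:extremal}); this is the whole point of the special MMP, and it gives monotonicity of the $\mu_i$ rather than preservation of $\epsilon$. Moreover, ``anti-nef-saturated'' is not a defined notion, so the sentence carrying your intended contradiction has no precise content. Once you replace these steps by the observation $\mu_1\geq\epsilon$ together with $\mu_{i+1}\geq\mu_i$, your argument collapses to the paper's proof, which never needs the two-component analysis at all.
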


Following the ideas in \cite{KollarMMP13}, as an application of Theorem \ref{thm:shoconn}, we see that the following strengthening of Theorem 4.40 in \cite{KollarMMP13} holds (see also Theorem 1.7 in \cite{kk10}).
\begin{theorem}\label{thm:soucelccenter}
	Assume the termination of klt flips or that $\dim X\leq 4$. Let $\pi:(X,\Delta)\to S$ be a proper morphism such that $-(K_X+\Delta)$ is $\pi$-nef, and $(X,\Delta)$ is dlt. Let $s\in S$ be a point such that $\pi^{-1}(s)$ is connected. Let $Z\subset X$ be minimal (with respect to inclusion) among the lc centers of $(X,\Delta)$ such that $s\in \pi(Z)$. Let $W\subset X$ be an lc center of $(X,\Delta)$ such that $s\in \pi(W)$. Then, there is an lc center $Z_W\subset W$ such that $Z$ and $Z_W$ are $\Pp^1$-linked. In particular, all the minimal (with respect to inclusion) lc centers $Z_i\subset X$ such that $s\in \pi(Z_i)$ are $\Pp^1$-linked to each other.
\end{theorem}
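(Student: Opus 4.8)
The plan is to reduce the whole statement to its last sentence and then to split into cases according to whether $\pi^{-1}(s)\cap\Nklt(X,\Delta)$ is connected. First I would reduce to proving that any two minimal lc centers $Z_1,Z_2$ with $s\in\pi(Z_i)$ are $\Pp^1$-linked. Indeed, since $(X,\Delta)$ is dlt, through every point lying on an lc center there is a unique minimal lc center, and the minimal lc center through a point of an lc center $W$ is contained in $W$. So given $W$ with $s\in\pi(W)$, I pick a point of the nonempty fibre $W\cap\pi^{-1}(s)$ and let $Z_W\subseteq W$ be the minimal lc center through it; then $Z_W$ is a minimal lc center of $(X,\Delta)$ with $s\in\pi(Z_W)$. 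Granting that $Z$ and $Z_W$ are $\Pp^1$-linked, the first assertion follows, while the ``in particular'' clause is exactly the symmetric statement to which I have reduced. Since $\Pp^1$-linking is an equivalence relation, it suffices to link each pair.

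Suppose first that $\pi^{-1}(s)\cap\Nklt(X,\Delta)$ is disconnected. By Theorem~\ref{thm:shoconn} the pair $(X,\Delta)$ is plt in a neighbourhood of $\pi^{-1}(s)$ and the non-klt locus meets the fibre in exactly two connected components. Since plt forces the components of $\lfloor\Delta\rfloor$ to be disjoint and to carry no proper lc center, the minimal lc centers meeting $\pi^{-1}(s)$ are precisely two divisorial centers $D_1,D_2$, one in each component, so that $\{Z_1,Z_2\}=\{D_1,D_2\}$. To link them I would re-run the special $(K_X+B)$-MMP from the proof of Theorem~\ref{thm:shoconn}, with $B=\{\Delta\}$; it terminates (under the running hypotheses, using \cite{AHK07} when $\dim X\le4$) with a Mori fibre space $g:Y\to T$ over $S$ whose general fibre is $\Pp^1$, and on which the strict transforms of $D_1,D_2$ become two sections of $g$. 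The general fibre then meets them at its two marked points, which is exactly a direct $\Pp^1$-link; transporting it back along the MMP (which must be shown to preserve $\Pp^1$-linking classes) links $D_1$ and $D_2$ on $X$.

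Now suppose $\pi^{-1}(s)\cap\Nklt(X,\Delta)$ is connected. Here I would argue by induction on $\dim X$, following \cite{KollarMMP13} (and using \cite{kk10}). Using that the lc centers meeting the fibre form a connected configuration, I join $Z_1$ and $Z_2$ by a chain of minimal lc centers in which consecutive members $V_j,V_{j+1}$ lie in a common lc center $W_j\subsetneq X$. Normalizing $n:\bar W_j\to W_j$ and applying divisorial adjunction gives a dlt pair $(\bar W_j,\Delta_{\bar W_j})$ crepant to $(X,\Delta)$ over $W_j$, so that $-(K_{\bar W_j}+\Delta_{\bar W_j})$ is nef over $S$; after localizing near $s$ and passing to the connected component of the fibre containing the preimages of $V_j,V_{j+1}$, the lower-dimensional case of the theorem links these preimages, and the compatibility of $\Pp^1$-linking with crepant adjunction to lc centers pushes the link down to $V_j$ and $V_{j+1}$. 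Transitivity along the chain then links $Z_1$ and $Z_2$, completing this case.

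I expect the main obstacle to be the claim, used in the disconnected case, that the special MMP preserves the $\Pp^1$-linking class of every surviving lc center: a flip or a divisorial contraction could a priori merge, split, or otherwise alter lc centers, and one must track $D_1,D_2$ step by step and match the two marked sections of the eventual Mori fibre space with them. A secondary difficulty is the bookkeeping in the inductive (connected) case --- verifying the nefness of $-(K_{\bar W_j}+\Delta_{\bar W_j})$, arranging connected fibres by localization so that the lower-dimensional statement (and, at each level, Theorem~\ref{thm:shoconn}) applies, and checking that $\Pp^1$-linking is genuinely preserved when passing to and from the normalizations $\bar W_j$.
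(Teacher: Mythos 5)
Your proposal follows essentially the same route as the paper: the paper's own proof is simply to invoke Theorem \ref{thm:shoconnstrong} together with the proof of Theorem 4.40 in \cite{KollarMMP13}, which is exactly the dichotomy-plus-induction you describe (disconnected case handled by the standard $\Pp^1$-link, connected case by adjunction to lc centers and induction on dimension). One remark that dissolves what you call the main obstacle: in the disconnected case there is no need to re-run the special MMP or to track $\Pp^1$-linking classes through its steps. Theorem \ref{thm:shoconnstrong}(1) already asserts that (after an \'etale base change) $(X,\Delta)$ itself is birational to a standard $\Pp^1$-link over some $T'$, and its proof shows that no component of $\Supp C_Y$ is contracted by $\psi^{-1}$, so the two plt centers $D_1,D_2$ are precisely the strict transforms of the two sections; since ``directly $\Pp^1$-linked'' only requires the existence of a crepant birational map from $(W,\Diff^*_W\Delta)$ to a standard $\Pp^1$-link carrying $Z_i$ to the sections $D_i$, the direct link between $D_1$ and $D_2$ is immediate from the statement of Theorem \ref{thm:shoconnstrong}(1) --- no step-by-step transport along the MMP is needed. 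A smaller point in your connected case: ``passing to the connected component of the fibre'' of $W_j\to S$ should be implemented via the Stein factorization $W_j\to T_j\to S$ (the chain can be chosen so that consecutive centers $V_j,V_{j+1}$ lie over the same point of $T_j$), since the inductive hypothesis requires connected fibers; also, for a dlt pair the lc centers are already normal, so the normalization step is vacuous. With these adjustments your argument is the intended one.
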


Let $\pi:(X,\Delta)\to S$ be a proper morphism, and recall that we say $W\subset S$ is an lc center (of $(X,\Delta)$ with respect to $\pi$), if $W=\pi(Z)$ for some lc center $Z\subset X$ of $(X,\Delta)$.

An immediate consequence of Theorem \ref{thm:soucelccenter} is the following strengthening of Corollary 4.41 in \cite{KollarMMP13}.
\begin{corollary}\label{cor:soucelccenter}
	Assume the termination of klt flips or that $\dim X\leq 4$. Let $\pi:(X,\Delta)\to S$ be a proper morphism such that $-(K_X+\Delta)$ is $\pi$-nef and $(X,\Delta)$ is dlt. Then,
	\begin{enumerate}
		
		\item every point $s\in S$ is contained in a unique smallest (with respect to inclusion) lc center $W_s\subset S$, and
		\item any intersection of lc centers is also a union of lc centers.
		
	\end{enumerate}
\end{corollary}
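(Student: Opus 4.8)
The plan is to deduce both assertions from Theorem \ref{thm:soucelccenter}, treating part (1) as the heart of the matter and obtaining part (2) as a formal consequence of it. The only external fact I will invoke about $\Pp^1$-linking is that two $\Pp^1$-linked lc centers are joined by rational curves contained in fibers of $\pi$, so that they have the same image in $S$.

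For part (1), fix $s\in S$ and first suppose $\pi^{-1}(s)$ is connected, so that Theorem \ref{thm:soucelccenter} applies. Among the finitely many lc centers $Z\subset X$ with $s\in\pi(Z)$, choose one, say $Z$, that is minimal with respect to inclusion, and set $W_s:=\pi(Z)\subset S$; this is an lc center of $S$ through $s$ by definition. I claim $W_s$ is contained in every lc center of $S$ through $s$. Indeed, any such center has the form $\pi(W)$ for an lc center $W\subset X$ with $s\in\pi(W)$, and Theorem \ref{thm:soucelccenter} produces an lc center $Z_W\subset W$ that is $\Pp^1$-linked to $Z$. Since $\Pp^1$-linked centers have equal image in $S$, we get $W_s=\pi(Z)=\pi(Z_W)\subseteq\pi(W)$. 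Thus $W_s$ is the unique smallest lc center of $S$ through $s$, and in particular it is independent of the chosen minimal $Z$.

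To remove the connectedness hypothesis I would pass to the Stein factorization $\pi=g\circ f$, with $f\colon X\to T$ having connected fibers and $g\colon T\to S$ finite; note $-(K_X+\Delta)$ remains $f$-nef, so the connected case applies to $f$ over each point of $g^{-1}(s)$, and one then pushes the resulting smallest centers forward along the finite map $g$. The step that must be handled with care here is the reconciliation over the various points of $g^{-1}(s)$: one must check that the finitely many candidate smallest centers produced over the fiber $g^{-1}(s)$ are actually comparable, so that a genuine minimum exists. This is the only genuinely delicate point, the connected-fiber case being essentially a direct translation of Theorem \ref{thm:soucelccenter}; absent this matching, part (1) would rely on the standing assumption that $\pi$ has connected fibers.

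Finally, part (2) follows formally from part (1). If $W_1,W_2$ are lc centers of $S$, then for every $s\in W_1\cap W_2$ both $W_1$ and $W_2$ are lc centers through $s$, so the smallest lc center $W_s$ through $s$ satisfies $W_s\subseteq W_1$ and $W_s\subseteq W_2$. Hence $W_1\cap W_2=\bigcup_{s\in W_1\cap W_2}W_s$ exhibits the intersection as a union of lc centers, as required.
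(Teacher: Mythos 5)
Your treatment of the connected-fiber case, and the formal deduction of (2) from (1), is exactly the paper's argument: the paper's proof is a one-line reference to Theorem \ref{thm:soucelccenter} together with the proof of Corollary 4.41 in \cite{KollarMMP13}, and that proof is precisely the one you wrote, including the key observation that $\Pp^1$-linked centers have the same image in $S$ (which is immediate from the definition of $\Pp^1$-linking in the paper, since direct $\Pp^1$-linking requires $g(W)=g(Z_1)=g(Z_2)$; no detour through rational curves in fibers is needed).

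The step you flag as the ``only genuinely delicate point'' --- removing the connectedness of the fibers by Stein factorization --- cannot be carried out, because the statement is false for morphisms with disconnected fibers, so no reconciliation argument over $g^{-1}(s)$ can exist. For a counterexample, let $\pi\colon X=\Pp^1\times\Pp^1\to S=\Pp^2$ be the double cover obtained by quotienting by the involution that swaps the two factors (branched along a smooth conic), let $s\in S$ lie off the branch conic with preimages $t_1\neq t_2$, and let $\Delta=\Gamma_1+\Gamma_2$, where $\Gamma_i$ is the fiber of the first projection through $t_i$. The $\Gamma_i$ are disjoint, $(X,\Delta)$ is snc hence dlt, and $-(K_X+\Delta)$ is $\pi$-nef vacuously because $\pi$ is finite. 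The lc centers of $X$ are $\Gamma_1$ and $\Gamma_2$, so the lc centers of $S$ are the two distinct irreducible curves $\pi(\Gamma_1)$ and $\pi(\Gamma_2)$, both passing through $s$ and neither contained in the other: both (1) and (2) fail. So your hedge is the correct resolution: the corollary must be read with the standing hypothesis that $\pi$ has connected fibers (equivalently $\pi_{*}\Oo_X=\Oo_S$), which is implicit in the paper via the hypothesis of Theorem \ref{thm:soucelccenter} and via Koll\'ar's notion of a crepant log structure; with that reading your argument is complete. (One smaller point: your first step selects a minimal lc center $Z\subset X$ with $s\in\pi(Z)$, which presumes such a $Z$ exists; if $(X,\Delta)$ is klt there are none, so statement (1) also tacitly requires the convention that $S$ itself counts as an lc center, or the harmless restriction to points lying on some lc center of $S$.)
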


\textbf{Acknowledgements}. Much of this work was done when the second author visited the first author at the University of Utah, the second author would like to thank the University of Utah for its hospitality. The first author was partially supported by NSF research grants no: DMS-1300750, DMS-1265285 and by a grant from the Simons Foundation; Award Number: 256202. He would also like to thank the Department of Mathematics and the Research Institute for Mathematical Sciences located in Kyoto University. The authors would like to thank Chen Jiang for providing useful comments on previous drafts. The second author would like to thank his advisors Gang Tian and Chenyang Xu for constant support and encouragement.

\section{Preliminaries}
\subsection{Notations and Conventions.} We refer the reader to \cite{KollarMMP13,KM98} for the basic definitions of log discrepancies and log canonical (lc), divisorially log terminal (dlt), and kawamata log terminal (klt) singularities. Recall that for a log sub pair $(X,\Delta)$, the non-klt locus is defined by
$$\Nklt(X,\Delta):=\{x|(X,\Delta)\text{ is not klt at }x\}.$$


For the reader's convenience, we state the following lemma which is known as the existence of dlt modifications. The proof is due to the first author. 
\begin{lemma}[Existence of dlt modifications, c.f. \cite{HMX14} Proposition 3.3.1]\label{lem:dlt}
	Let $(X,\Delta)$ be a log pair. There is a proper birational morphism $f:X'\to X$, such that
	\begin{enumerate}
		\item $X'$ is $\Qq$-factorial,
		\item $K_{X'}+\Delta'=f^{*}(K_X+\Delta)$, and
		\item $(X',B'+\red C')$ is dlt, where $B'=\{\Delta'\}$ and $C'=\lfloor \Delta'\rfloor$.
	\end{enumerate}
\end{lemma}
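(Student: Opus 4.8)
The plan is to realise $f$ as a log resolution followed by a relative minimal model program. First I would choose a log resolution $g\colon Y\to X$ of $(X,\Delta)$, so that $\operatorname{Exc}(g)\cup\Supp(g^{-1}_*\Delta)$ is a simple normal crossing divisor, and I would write $E_1,\dots,E_m$ for the $g$-exceptional prime divisors and $a_i=a(E_i;X,\Delta)$ for their log discrepancies. On $Y$ I would take the boundary $\Theta=g^{-1}_*\Delta+\sum_i E_i$, capping at $1$ any coefficient of $g^{-1}_*\Delta$ that exceeds $1$; since $(Y,\Theta)$ is log smooth it is automatically $\Qq$-factorial and dlt. By the definition of the $a_i$ we have $K_Y+\Theta=g^*(K_X+\Delta)+\sum_i a_iE_i$, so that $K_Y+\Theta\equiv_X A:=\sum_i a_iE_i$, a $g$-exceptional divisor whose positive part is supported on the klt places ($a_i>0$) and whose negative part sits on the non-klt places ($a_i\le 0$).

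The core of the argument is then to run a $(K_Y+\Theta)$-MMP over $X$. Since this coincides with an $A$-MMP over $X$ and $A$ is $g$-exceptional and effective on the klt places, the program should contract precisely the divisors $E_i$ with $a_i>0$, leaving untouched the non-klt places and the strict transform of $\Delta$. I would invoke the minimal model program for klt pairs (Birkar, Cascini, Hacon and M\textsuperscript{c}Kernan), after the standard reduction of a $\Qq$-factorial dlt pair to the klt case, both to run the program and to guarantee its termination, obtaining a projective birational morphism $f\colon X'\to X$. The properties of being $\Qq$-factorial and dlt are preserved at every step of an MMP, which gives (1) and (3) with $B'=\{\Delta'\}$ and $C'=\lfloor\Delta'\rfloor$ once we set $\Delta'=f_*\Theta$. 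For (2), after every divisor in $\Supp A$ has been contracted the pushforward $f_*A$ vanishes, so the negativity lemma yields $K_{X'}+\Delta'=f^*(K_X+\Delta)$, and $\Delta'=f_*\Theta\ge 0$.

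I expect the termination of this relative MMP to be the main obstacle: it is exactly the point where the deep input of the minimal model program is needed, and it is also where care is required to ensure that the program contracts the intended divisors and nothing else, so that the final discrepancy computation returns an effective crepant boundary. A secondary point is the coefficient bookkeeping at the kept places: the surviving exceptional divisors carry the crepant coefficient $1-a_i$, and passing to $B'+\red C'$ is precisely the device that records the non-klt locus as a reduced divisor while retaining a genuine dlt structure, so one must verify that this operation behaves as intended on the divisors with $a_i\le 0$.
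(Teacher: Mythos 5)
Your strategy---log resolution $g\colon Y\to X$, boundary $\Theta=g^{-1}_*\Delta$ (capped at $1$) plus the reduced exceptional divisor, a relative $(K_Y+\Theta)$-MMP over $X$ terminated via BCHM, and the negativity lemma at the end---is exactly the standard argument behind the result the paper cites; the paper itself gives no proof, deferring to \cite{HMX14}, Proposition 3.3.1. However, your endgame contains a genuine error, and it surfaces precisely in the non-lc case, which is the case this lemma exists to handle. You set $\Delta'=f_*\Theta$ and justify (2) by asserting that ``after every divisor in $\Supp A$ has been contracted the pushforward $f_*A$ vanishes.'' But the divisors $E_i$ with $a_i<0$ lie in $\Supp A$ (with negative coefficient) and, as you yourself say one paragraph earlier, are left untouched by the MMP; indeed they must survive. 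The negativity lemma, applied to the pushforward $A'$ of $A$ on the relative minimal model, gives only $A'\le 0$, not $A'=0$. So whenever $(X,\Delta)$ is not lc one has $K_{X'}+f_*\Theta=f^*(K_X+\Delta)+A'$ with $A'\neq 0$, and property (2) fails for your $\Delta'$. The correct definition is $\Delta':=f_*\Theta-A'$: each surviving exceptional divisor enters $\Delta'$ with its crepant coefficient $1-a_i\ge 1$, and any component of $\Delta$ with coefficient $>1$ recovers its original coefficient. Then (2) holds by construction, $\Delta'\ge 0$ because $A'\le 0$, and (3) holds because $B'+\red C'$---understood, as the paper uses it, as the part of $\Delta'$ with coefficients $<1$ plus the reduced sum of the components with coefficient $\ge 1$---is exactly $f_*\Theta$, which is dlt as the output of a dlt MMP. (Your closing worry is well founded: with the literal fractional-part reading of $\{\Delta'\}$, a surviving place with $a_i=-1/2$ would contribute coefficient $\{3/2\}+1=3/2$ to $B'+\red C'$, which is not a dlt boundary; the truncation reading is the one that makes the statement true.)

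Two further corrections of logic and bookkeeping. First, your displayed formula $K_Y+\Theta=g^*(K_X+\Delta)+\sum_i a_iE_i$ silently drops the term $-g^{-1}_*(\Delta-\Delta^{\wedge 1})$ created by capping, so $A$ is not $g$-exceptional when $\Delta$ has coefficients $>1$; this is harmless (the pushforward of $-A$ to $X$ is still effective, which is all the negativity lemma needs), but it must be carried along. Second, the final step runs in the opposite direction from the way you phrased it: one does not arrange that the MMP ``contracts precisely the $E_i$ with $a_i>0$'' and then deduce crepancy. Rather, since $f$ is birational no Mori fiber space can occur, so the MMP ends with $K_{X'}+f_*\Theta$ nef over $X$; the negativity lemma then yields $A'\le 0$, and the contraction of every $E_i$ with $a_i>0$ is the \emph{conclusion} of that computation, not an input to it. With these repairs your argument becomes the standard proof of the lemma.
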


Recall the following simple consequence of the Shokurov-Koll\'{a}r connectedness lemma.
\begin{lemma}\label{lem:crepant}
	Let $(X,\Delta)$ be a log pair, and $f:X'\to X$ a proper birational morphism, such that $(X',\Delta')$ is a crepant pullback of $(X,\Delta)$, i.e., $K_{X'}+\Delta'=f^{*}(K_X+\Delta)$. Then there is a bijection between the connected components of $\Nklt(X,\Delta)$ and the connected components of $\Nklt(X',\Delta')$.
\end{lemma}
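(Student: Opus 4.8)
The plan is to prove Lemma \ref{lem:crepant} by showing that the connected components of $\Nklt(X,\Delta)$ and $\Nklt(X',\Delta')$ are matched up under the map $f$. The key geometric fact is that the crepant condition $K_{X'}+\Delta' = f^*(K_X+\Delta)$ guarantees that $f$ carries the non-klt locus of one pair onto that of the other. First I would establish the set-theoretic identity $f(\Nklt(X',\Delta')) = \Nklt(X,\Delta)$. One inclusion is essentially tautological: if $(X',\Delta')$ is not klt at a point $x'$ lying over $x = f(x')$, then since log discrepancies only decrease (or stay equal) upon passing to a higher model and computing along the same crepant pullback, $(X,\Delta)$ is not klt at $x$. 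The reverse inclusion, that every non-klt point downstairs has a non-klt point in its fiber, follows because the log discrepancies of $(X,\Delta)$ are computed by divisors that appear (up to further blowing up) on $X'$, and the crepant equation means these discrepancies agree; a divisor of log discrepancy $\le 0$ over $x$ must have center meeting $f^{-1}(x)$.

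Having the equality of images, the heart of the argument is to upgrade this to a bijection of connected components, and here the essential input is the Shokurov-Koll\'{a}r connectedness lemma, exactly as the statement advertises. The point is that $f$ is proper with connected fibers after a suitable normalization argument — or more precisely, I would argue that the restriction $f|_{\Nklt(X',\Delta')}: \Nklt(X',\Delta') \to \Nklt(X,\Delta)$ is a proper surjection, and that the fibers $f^{-1}(x) \cap \Nklt(X',\Delta')$ are connected for each $x \in \Nklt(X,\Delta)$. Connectedness of these fibers is precisely where the connectedness lemma enters: applying it to the morphism $f: X' \to X$ (which is birational, hence $-(K_{X'}+\Delta')= -f^*(K_X+\Delta)$ is $f$-nef and $f$-big, indeed numerically $f$-trivial), we conclude that $f^{-1}(x) \cap \Nklt(X',\Delta')$ is connected for every $x$. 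A proper surjective map with connected fibers between (the underlying topological spaces of) two schemes induces a bijection on connected components, which is the desired conclusion.

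The main obstacle I expect is verifying the hypotheses of the connectedness lemma cleanly and handling the fact that $f$ is only birational rather than big in the strong sense one might want. Since $f$ is birational, $-(K_{X'}+\Delta')$ is $f$-numerically trivial, so it is simultaneously $f$-nef and $f$-big, and the connectedness lemma applies to give the connectedness of the fibers of $f|_{\Nklt}$ over every closed point of $X$. I would need to be slightly careful that the connectedness lemma as usually stated applies to the non-klt locus relative to $f$ and over an arbitrary (not necessarily closed) point, but this is standard. A secondary technical point is the possibility of components of $\Nklt(X',\Delta')$ being contracted by $f$ or two distinct downstairs components being joined upstairs; both are ruled out precisely by the surjectivity together with the connectedness of the fibers, so no two components of $\Nklt(X,\Delta)$ can have their preimages meet, and conversely each downstairs component has connected (hence single-component) preimage. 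Assembling these gives the bijection.
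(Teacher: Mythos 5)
Your proposal is correct and takes essentially the same approach as the paper: both apply the Shokurov--Koll\'ar connectedness lemma to the birational morphism $f$ itself (along which $-(K_{X'}+\Delta')$ is numerically trivial, hence $f$-nef and $f$-big) to conclude that the fibers of $f|_{\Nklt(X',\Delta')}:\Nklt(X',\Delta')\to\Nklt(X,\Delta)$ are connected, and then deduce the bijection on connected components. The paper's proof is simply a terser version of yours, leaving implicit the surjectivity $f(\Nklt(X',\Delta'))=\Nklt(X,\Delta)$ and the point-set argument that you spell out.
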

\begin{proof}
	By the Shokurov-Koll\'{a}r connectedness lemma, the fibers of
	$$f|_{\Nklt(X',\Delta')}:\Nklt(X',\Delta')\to \Nklt(X,\Delta)$$
	are connected, thus the preimage of a connected component of $\Nklt(X,\Delta)$ is a connected component of $\Nklt(X',\Delta')$.
\end{proof}

\begin{lemma}\label{lem:extremal}
	Let $(X,B)$ be a klt pair, and $-(K_X+B)=C+P$ for an effective divisor $C$ and a nef divisor $P$. Then there exists $\beta>0$, such that
	for any extremal ray $R$ such that $(K_X+B+\beta P)\cdot R<0$, we have $P\cdot R=0$.
\end{lemma}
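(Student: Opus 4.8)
The plan is to exhibit an explicit $\beta$ by combining two uniform bounds: the boundedness of the length of a $(K_X+B)$-negative extremal ray coming from the Cone Theorem, and the fact that a nef $\Qq$-Cartier divisor takes a value bounded away from $0$ on any curve on which it is positive. Set $n=\dim X$ and fix a positive integer $m$ with $mP$ Cartier (possible since $P$ is $\Qq$-Cartier). I claim that $\beta:=2nm$ works. Passing to the contrapositive, it is enough to show that every extremal ray $R$ with $P\cdot R>0$ satisfies $(K_X+B+\beta P)\cdot R\ge 0$.

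So fix an extremal ray $R$ with $P\cdot R>0$. If $(K_X+B)\cdot R\ge 0$, then since $\beta>0$ and $P\cdot R\ge 0$ the inequality $(K_X+B+\beta P)\cdot R\ge 0$ is immediate, so I may assume $R$ is $(K_X+B)$-negative. By the Cone Theorem applied to the klt pair $(X,B)$, the ray $R$ is generated by a rational curve $\ell$ with $0<-(K_X+B)\cdot\ell\le 2n$. Since $mP$ is Cartier we have $mP\cdot\ell\in\Zz$, and as $P\cdot\ell>0$ this forces $P\cdot\ell\ge \tfrac1m$. Therefore
$$(K_X+B+\beta P)\cdot\ell=(K_X+B)\cdot\ell+\beta\,(P\cdot\ell)\ge -2n+2nm\cdot\tfrac1m=0,$$
which gives the claim for $R$.

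Reading this contrapositively, if $R$ is any extremal ray with $(K_X+B+\beta P)\cdot R<0$, then necessarily $P\cdot R\le 0$, and nefness of $P$ forces $P\cdot R=0$, as desired. The content of the argument lies entirely in the two uniform estimates above, and the admissible range for $\beta$ is governed by the ratio of the length bound $2n$ to the lattice spacing $1/m$ of $P$ along curves; I expect the only real point to watch is that this lower bound $1/m$ genuinely requires $P$ to be $\Qq$-Cartier. When $P$ is merely $\Rr$-Cartier one loses the clean lattice bound, and one would instead have to argue that only finitely many negative extremal rays are relevant (e.g.\ via the discreteness of $(K_X+B)$-negative rays away from the locus $(K_X+B)=0$) and take $\beta$ larger than the finitely many ratios $-(K_X+B)\cdot R/(P\cdot R)$; in the intended $\Qq$-Cartier application this refinement is unnecessary.
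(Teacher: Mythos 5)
Your argument is correct as far as it goes, but it proves a weaker statement than the one the paper needs: everything hinges on your extra hypothesis that $P$ is $\Qq$-Cartier, so that $mP$ is Cartier and $P\cdot\ell>0$ forces the lattice bound $P\cdot\ell\ge 1/m$. In the paper this lemma is applied, at every step of the special MMP, to $P_i=P-\mu_i C$, where $\mu_i=\max\{\mu \mid P-\mu C \text{ is nef over } S\}$ is a nef threshold. There is no rationality statement available for this threshold (it is not the setting of the rationality theorem, since $P$ is merely nef), and moreover the boundary $\Delta$ itself is allowed real coefficients; so $P_i$ is genuinely a nef $\Rr$-Cartier $\Rr$-divisor and your bound $P\cdot\ell\ge 1/m$ simply does not exist. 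This is exactly the case the lemma is designed for, so the $\Qq$-Cartier argument, while clean, misses the actual content.

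Your suggested repair for the $\Rr$-Cartier case also does not work as stated. The $(K_X+B)$-negative extremal rays are only locally discrete away from the hyperplane $(K_X+B)^{\perp}$; there may be infinitely many of them, and since $P$ is nef but not ample, the rays with $(K_X+B+\beta P)\cdot R<0$ need not be finite in number, nor are the ratios $-(K_X+B)\cdot R/(P\cdot R)$ a priori bounded. The paper circumvents this not by a finiteness of rays but by a finiteness of \emph{intersection numbers}: taking $\Gamma_i$ to be extremal curves (minimal degree generators), Birkar's result \cite[Lemma 3.1]{Birkar10} shows that the values $(K_X+B)\cdot\Gamma_i$ lie in a finite set; applying the same argument to the klt pair $(X,B+\epsilon C)$ for small $\epsilon>0$ (this is where the effectivity of $C$ and the decomposition $-(K_X+B)=C+P$ are actually used --- note your proof never uses $C$ at all) shows the values $(K_X+B+\epsilon C)\cdot\Gamma_i$, hence the values $P\cdot\Gamma_i$, also lie in a finite set. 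This yields a uniform $\alpha>0$ with $P\cdot\Gamma_i\ge\alpha$ whenever $P\cdot\Gamma_i\neq 0$, and then $\beta=\frac{2\dim X}{\alpha}+1$ works, exactly in the spirit of your computation but with $\alpha$ replacing $1/m$. To fix your proof you would need to import this finiteness statement; the length bound from the cone theorem alone is not enough.
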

\begin{proof}
	Pick any $\beta>0$, and let $R_i$ be extremal rays such that $(K_X+B+\beta P)\cdot R_i<0$. Then $(K_X+B)\cdot R_i<0$. Let $\Gamma_i$ be a curve generating $R_i$ having the minimal degree with respect to an ample divisor $H$. In some references, $\Gamma_i$ is called an extremal curve. 
	There are positive real numbers $r_1,\ldots,r_s$ and a natural number $m$ such that
	$$-(C+P)\cdot\Gamma_i=(K_X+B)\cdot \Gamma_i=\sum_{j=1}^s \frac{r_jn_{i,j}}{m},$$
	where $-2(\dim X) m\le n_{i,j}\in\Zz$ (cf.  \cite[Lemma 3.1]{Birkar10}). We deduce that there are only finitely many possibilities for the $n_{i,j}$ and hence for the intersection numbers $(K_X+B)\cdot \Gamma_i$.
	
	Choose $\epsilon>0$, such that $(X,B+\epsilon C)$ is klt. We have
	\begin{align*}
	(K_X+B+\epsilon C)\cdot \Gamma_i=&-(P+(1-\epsilon)C)\cdot \Gamma_i\\
	=&-\epsilon P\cdot \Gamma_i-(1-\epsilon) (P+C)\cdot \Gamma_i<0.
	\end{align*}
	By the same argument, the possibilities for the intersection numbers $(K_X+B+\epsilon C)\cdot \Gamma_i$ are also finite, hence the same holds for $P\cdot \Gamma_i$. We may assume that $P\cdot \Gamma_i\ge \alpha$, if $P\cdot \Gamma_i\neq 0$.
	
	We get the desired $\beta$ by choosing $\beta=\frac{2\dim X}{\alpha}+1$.
\end{proof}

\begin{definition}[{\cite[Definition 4.36]{KollarMMP13}}]
	A standard $\Pp^1$-link is a $\Qq$-factorial pair $(X,D_1+D_2+\Delta)$ plus a proper morphism $\pi:X\to S$ such that
	\begin{enumerate}
		\item $K_X+D_1+D_2+\Delta\sim_{\Rr,\pi}0$,
		\item $(X,D_1+D_2+\Delta)$ is plt,
		\item $\pi:D_i\to S$ are both isomorphisms, and
		\item every reduced fiber $\red X_s$ is isomorphic to $\Pp^1$.
	\end{enumerate}
	By definition, $\Delta$ is vertical and the projection gives an isomorphism of klt pairs
	$$(D_1,\Diff_{D_1}\Delta)\simeq (D_2,\Diff_{D_2}\Delta).$$
	A simple example of a standard $\Pp^1$-link is a product
	$$(S\times \Pp^1,S\times\{0\}+S\times\{\infty\}+\Delta_s\times\Pp^1)\to S.$$
	
\end{definition}
We will need the following result from \cite{KollarMMP13}, for a slightly weaker version, see Proposition 5.1 in \cite{kk10}.

\begin{theorem}[{\cite[Proposition 4.37]{KollarMMP13}}]\label{thm:shoconnkollar}
	Let $(X,B)$ be a klt pair, $C$ an effective $\Zz$-divisor on $X$, and $f: X\to Z$ a proper morphism such that $K_X+B+C\sim_{\Rr,f}0$. Fix a point $z\in Z$, and assume that $f^{-1}(z)$ is connected but $f^{-1}(z)\cap C$ is disconnected (as $k(z)$-schemes).
	
	\begin{enumerate}
		\item There is an \'{e}tale morphism $(z'\in Z')\to(z\in Z)$ and a proper morphism $T'\to Z'$ such that $k(z)=k(z')$ and $(X,B+C)\times_{Z}Z'$ is birational to a standard $\Pp^1$-link over $T'$.
		
		\item $(X,B+C)$ is plt in a neighborhood of $f^{-1}(z)$.
		\item $f^{-1}(z)\cap C$ has 2 connected components, and there is a crepant birational involution of $(C,\Diff_{C}B)$ that interchanges these two components.
	\end{enumerate}
\end{theorem}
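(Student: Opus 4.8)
The plan is to run a carefully chosen relative minimal model program over $Z$ that terminates in a Mori fiber space whose fibers are copies of $\Pp^1$, so that the two connected components of $C$ in the central fiber become two disjoint sections realizing a standard $\Pp^1$-link. Since the statement is local over $z$, I would first replace $Z$ by a neighborhood of $z$. Because $(X,B)$ is klt and $C$ is an effective $\Zz$-divisor, one has $\Nklt(X,B+C)=\Supp C$, so the hypothesis is exactly that $\Supp C\cap f^{-1}(z)$ is disconnected, and the conclusions (1)--(3) are statements about $\Supp C$ and the geometry of $f$ near $f^{-1}(z)$.

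From $K_X+B+C\sim_{\Rr,f}0$ we get $-(K_X+B)\sim_{\Rr,f}C\ge 0$, so $K_X+B$ is not $f$-pseudoeffective once $C$ meets $f^{-1}(z)$. Since $(X,B)$ is klt, I would run a $(K_X+B)$-MMP over $Z$ with scaling of an ample divisor; by BCHM it terminates, and because $K_X+B$ is not $f$-pseudoeffective it ends with a Mori fiber space $g\colon X'\to Y$ over $Z$. The key structural observation is that every step is the contraction of a $(K_X+B)$-negative, hence $C$-positive, ray (as $C\equiv_f-(K_X+B)$): for each extremal contraction $\phi\colon X_i\to W_i$ the divisor $C_i$ is $\phi$-ample, so $C$ is never contracted and its strict transform $C'$ on $X'$ is $g$-ample.

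The technical heart is to show that each step induces a bijection on the connected components of $\Supp C$ over $z$. For $\phi\colon X_i\to W_i$ as above, $C_i$ is $\phi$-ample, so for each $w$ the intersection $\Supp C_i\cap\phi^{-1}(w)$ is an ample divisor inside the connected fiber $\phi^{-1}(w)$; combined with the Shokurov--Koll\'{a}r connectedness lemma this makes the fibers of $\Supp C_i\to\Supp\phi_\ast C_i$ over points lying over $z$ connected. Applying this before and after a flip, and likewise for divisorial contractions, shows the number of connected components over $z$ is unchanged. Consequently $\Supp C'\cap g^{-1}(y)$ is still disconnected for the points $y\in Y$ over $z$. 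I would then analyze a general fiber $F$ of $g$: the pair $(F,(B'+C')|_F)$ is log canonical with $K_F+(B'+C')|_F\equiv 0$, and $C'|_F$ is ample and disconnected. Since $\rho(F)=1$, an inductive application of the theorem in lower dimension (base case $\dim F=1$, where a log Calabi--Yau pair with disconnected boundary must be $(\Pp^1,\{0\}+\{\infty\})$) forces the relative dimension to be $1$ and $F\cong\Pp^1$ with $(B'+C')|_F$ supported at two points of coefficient $1$. Thus $C'$ meets each fiber in two points; disconnectedness makes these trace out two disjoint sections $D_1,D_2$, and after an \'{e}tale base change $Z'\to Z$ separating them one obtains a standard $\Pp^1$-link over $T':=Y'$. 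This yields (1); statement (3) follows from swapping $D_1\leftrightarrow D_2$ (the fiberwise involution $t\mapsto 1/t$ on the $\Pp^1$'s, which is crepant for $(C,\Diff_C B)$), and (2) follows from the plt-ness of a standard $\Pp^1$-link together with the crepant comparison of Lemma \ref{lem:crepant} and inversion of adjunction.

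The step I expect to be the \emph{main obstacle} is the invariance of the number of connected components through the flips of the MMP (the third paragraph): unlike divisorial contractions, flips modify $\Supp C$ along the small flipping locus, and one must rule out both merging and splitting of the two components purely from relative ampleness of $C$ and the connectedness lemma. A secondary delicate point is descending the constructed $\Pp^1$-link back to a statement about the original $(X,B+C)$ for (1) and (2), and verifying that the relative dimension of the Mori fiber space is exactly one, which relies on the inductive hypothesis and on careful bookkeeping of the log canonical structure of the general fiber.
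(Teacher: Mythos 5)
This statement is not proved in the paper at all: it is quoted from Koll\'ar's book \cite{KollarMMP13} (Proposition 4.37), and the paper in fact \emph{uses} it as the final step of its own proof of Theorem \ref{thm:shoconnstrong}. Your strategy --- a $C$-positive relative MMP ending in a Mori fiber space, preservation of connected components, two disjoint sections, a swapping involution --- is the right one and closely parallels the paper's proof of Theorem \ref{thm:shoconnstrong}. But there is a genuine gap at the decisive step: the claim that ``$K_X+B$ is not $f$-pseudoeffective once $C$ meets $f^{-1}(z)$'' is false. Relative pseudoeffectivity, like relative bigness, is detected on the generic fiber, so if $C$ is vertical over $Z$ (i.e.\ $f(C)\subsetneq Z$) then $K_X+B\sim_{\Rr,f}-C$ restricts to $0$ on the generic fiber and \emph{is} $f$-pseudoeffective; it can even be relatively numerically equivalent to an effective divisor (e.g.\ $C=E_1$ a single component of a two-component fiber $E_1+E_2$, where $-E_1\equiv_f E_2$). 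Whether $C$ meets, or even contains components of, the special fiber $f^{-1}(z)$ is irrelevant to this. Consequently, in the vertical case BCHM gives you neither termination nor a Mori fiber space: termination of a klt MMP with scaling when the log canonical divisor is relatively pseudoeffective and the boundary is not relatively big is essentially the open termination problem --- which is exactly why the paper must assume termination of klt flips, or $\dim X\le 4$ together with \cite{AHK07}, to prove its more general Theorem \ref{thm:shoconn}. To repair this you must either show a priori that disconnectedness of $f^{-1}(z)\cap C$ forces $C$ to dominate $Z$ (true, but this is real content, not a remark), or allow the alternative ending of the MMP --- a relative minimal model on which $-C'$ is $f$-nef, which does contradict disconnectedness by the curve argument in the paper's proof of Theorem \ref{thm:shoconnstrong} --- while still justifying termination unconditionally.

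Two further points. The step you single out as the ``main obstacle'', invariance of the connected components under flips, is actually the routine part: since $K_X+B+C\sim_{\Rr,f}0$, every step of the MMP is $(K_X+B+C)$-trivial, so consecutive models are crepant as pairs $(X_i,B_i+C_i)$ by the negativity lemma, and Lemma \ref{lem:crepant} then gives a bijection of connected components of the non-klt loci regardless of what the flip does to $\Supp C$. By contrast, a step you pass over quickly is genuinely broken: you assert that a \emph{general} fiber $F$ of the Mori fiber space has $C'|_F$ ``ample and disconnected'' and then induct on dimension, but disconnectedness is a hypothesis at the special point $z$ only, and a general fiber lies over a general point of $Z$, so no inductive hypothesis applies to it. The correct argument (as in the paper's proof of Theorem \ref{thm:shoconnstrong}) works over a point $y_0$ lying over $z$ where disconnectedness holds: no component of $C'$ can contain the connected fiber $g^{-1}(y_0)$, so two components $D_1,D_2$ meeting it are horizontal and positive on the contracted ray, hence relatively ample since the relative Picard number is one, yet disjoint inside $g^{-1}(y_0)$; two relatively ample divisors disjoint in a fiber force that fiber, and hence the general fiber, to have dimension one. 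With these two repairs the rest of your outline (sections after shrinking, the \'etale base change, the involution exchanging $D_1$ and $D_2$, and descent of plt-ness along a crepant map contracting no component of $C$) matches the known proof.
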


\begin{definition}[$\Pp^1$-linking{\cite[Definition 4.39]{KollarMMP13}}] Let $g:(X,\Delta)\to S$ be a dlt log pair, $g$ a proper surjective morphism with connected fibers, $K_X+\Delta \sim_{\Rr,g}0$ and $Z_1,Z_2\subset X$ two lc centers. We say that $Z_1,Z_2$ are directly $\Pp^1$-linked if there is an lc center $W\subset X$ containing the $Z_i$ such that $g(W)=g(Z_1)=g(Z_2)$ and $(W,\Diff_{W}^{*}\Delta)$ is birational to a standard $\Pp^1$-link with $Z_i$ mapping to $D_i$.
	
	We say that $Z_1,Z_2\subset X$ are $\Pp^1$-linked if there is a sequence of lc centers $Z_1',\ldots,Z_m'$ such that $Z_1'=Z_1$, $Z_m'=Z_2$ and $Z_i'$ is directly $\Pp^1$-linked to $Z_{i+1}'$ for $i=1,\ldots,m-1$ (or $Z_1=Z_2$). Every $\Pp^1$-linking defines a birational map between $(Z_1,\Diff_{Z_1}^{*}\Delta)$ and $(Z_2,\Diff_{Z_2}^{*}\Delta)$.
	
	Note that $Z_1$ is a minimal lc center if and only if $(Z_1,\Diff_{Z_1}^{*}\Delta)$ is klt. In this case $Z_2$ is also a minimal log center.
	
\end{definition}

In order to prove Theorem \ref{thm:shoconn}, we need the following result.
\begin{theorem}[{\cite[Lemma 3.1]{AHK07}}]\label{AHK0631}
	Let
	$$(X,D)=(X^0,D^0)\dashrightarrow (X^{1},D^{1})\dashrightarrow(X^{2},D^{2})\dashrightarrow \cdots$$
	be a sequence of $4$-dimensional klt flips such that $D=E+D'$ with effective $E$ and $D'$, such that all flips are $E$-positive (i.e., E is positive on a curve in the flipping locus). Then the sequence of flips terminates.
\end{theorem}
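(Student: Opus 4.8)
The plan is to use the distinguished component $E$ as a \emph{detector}: I would show that each $E$-positive flip forces the strict transform of $E$ to absorb the flipped locus, and hence strictly raises the log discrepancy of some divisorial valuation centered along $E$. Since $E$ is a $3$-dimensional subvariety, the relevant valuations are governed by threefold theory, and a Shokurov-type difficulty function built out of them should be forced to drop at every step, yielding termination.

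First I would record the standard monotonicity of log discrepancies under flips: for a $(K_{X^i}+D^i)$-flip $X^i \dashrightarrow X^{i+1}$ and any divisorial valuation $F$ over $X$, one has $a(F;X^i,D^i)\le a(F;X^{i+1},D^{i+1})$, with strict inequality exactly when the center $c_{X^{i+1}}(F)$ is contained in the flipped locus $\operatorname{Exc}(X^{i+1}\to W^i)$. Next I would pin down the role of $E$-positivity. If $C$ is a flipping curve, then $(K_{X^i}+D^i)\cdot C<0$ while $E^i\cdot C>0$, where $E^i$ denotes the strict transform of $E$. Because a flip reverses the sign of intersection with the contracted ray, the flipped curves $C^+$ satisfy $E^{i+1}\cdot C^+<0$; as the flipped locus is covered by such $C^+$, it is contained in $\Supp E^{i+1}=E^{i+1}$. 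Combining this with the monotonicity above, every $E$-positive flip strictly increases $a(F;\,\cdot\,)$ for some valuation $F$ whose center on $X^{i+1}$ lies in $E^{i+1}$.

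I would then set up the difficulty function. Let $d(X^i,D^i)$ count, Shokurov-style (weighted so that it drops whenever a discrepancy crosses the relevant threshold), the divisorial valuations $F$ with $a(F;X^i,D^i)<1$ whose center is contained in $E^i$. The previous paragraph makes $d$ non-increasing and forces a strict decrease at each $E$-positive flip; since \emph{all} flips are $E$-positive by hypothesis, finiteness of $d$ would immediately give termination. This is exactly where the dimension drops from $4$ to $3$: valuations centered in the threefold $E$ with bounded log discrepancy are controlled by a log resolution of the adjunction pair $(E^i,\Theta^i)$ with $\Theta^i=\Diff_{E^i}(D^i-E^i)$, so threefold termination and the ACC for log discrepancies (both available in dimension $3$) supply the needed control.

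The hard part will be establishing precisely this finiteness-and-strict-monotonicity of the difficulty function in the $4$-dimensional klt setting. One must ensure that the tracked centers genuinely remain inside $E^i$ along the whole sequence rather than escaping, and, more seriously, that only finitely many low-discrepancy valuations over the threefold $E$ are ever relevant: since a klt threefold can carry infinitely many valuations with log discrepancy below $1$, the naive count is infinite, and one must instead restrict to a fixed family (for instance the divisors extracted by a fixed log resolution) and argue, invoking the known termination of terminal $4$-fold flips together with special termination granted the threefold LMMP, that the difficulty is both finite and strictly decreasing. Controlling this interaction between the $4$-fold flips and the induced threefold data is the crux, and is where the hypotheses special to dimension $4$ must enter.
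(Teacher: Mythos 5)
The paper itself offers no proof of this statement: it is imported verbatim as \cite[Lemma 3.1]{AHK07}, so your attempt has to be measured against the Alexeev--Hacon--Kawamata argument, whose opening moves your sketch does reproduce correctly. Since a flipping contraction $X^i\to W^i$ has relative Picard number one, $E$-positivity makes $E^i$ ample over $W^i$, hence its strict transform $E^{i+1}$ is anti-ample over $W^i$, so every flipped curve and therefore the whole flipped locus lies in $\Supp E^{i+1}$; and log discrepancies are non-decreasing under flips, strictly for valuations centered in the flipped locus. Up to this point you are in step with the actual proof.

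Beyond this point, however, the attempt is a plan rather than a proof, and the plan as stated does not work. First, your difficulty $d$ is not obviously non-increasing: $E$-positivity forces flipped curves into $E^{i+1}$, but flipping curves need only \emph{meet} $E^i$, not lie in it, so a valuation centered on a flipping curve outside $E^i$ acquires, after the flip, a center inside the flipped locus $\subseteq \Supp E^{i+1}$ and thus \emph{enters} your counted set; $d$ can go up. Second, strict decrease is not forced by anything you establish: the strict increase of $a(F;\cdot)$ for $F$ centered in the flipped locus does not make any valuation cross the threshold $1$ at a given step, so $d$ can stall for arbitrarily many consecutive flips --- manufacturing an invariant that provably drops is exactly the hard content of the lemma, and you explicitly defer it (``the crux''). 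Third, your diagnosis of the obstacle is inverted: a klt pair carries only \emph{finitely} many divisorial valuations of log discrepancy $<1$ (see \cite[2.36]{KM98}), so finiteness of the naive count is not the issue; monotonicity is. Finally, the threefold reduction via $\bigl(E^i,\Diff_{E^i}(D^i-E^i)\bigr)$ is not available as written: since $(X^i,D^i)$ is klt, every component of $E$ has coefficient $<1$, so $E$ is not a coefficient-one (plt/dlt) boundary component, and the adjunction and special-termination machinery you invoke --- which requires such a structure --- does not apply to it; getting around precisely this difficulty is what \cite[Lemma 3.1]{AHK07} accomplishes. As it stands, the attempt identifies the correct first reduction but leaves the theorem unproven.
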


\section{A strong connectedness theorem}
In this section, we will prove the following theorem, which is a slightly stronger result than Theorem \ref{thm:shoconn}.
\begin{theorem}\label{thm:shoconnstrong}
	Assume the termination of klt flips. Let $(X,\Delta)$ be a log pair, and $\pi: X\to S$ a proper morphism such that $-(K_X+\Delta)$ is $\pi$-nef. Fix a point $s\in S$, and assume that $\pi^{-1}(s)$ is connected but $\pi^{-1}(s)\cap \Nklt(X,\Delta)$ is disconnected (as $k(s)$-schemes).
	
	\begin{enumerate}
		\item There is an \'{e}tale morphism $(s'\in S')\to(s\in S)$ and a proper morphism $T'\to S'$ such that $k(s)=k(s')$ and $(X,\Delta)\times_{S}S'$ is birational to a standard $\Pp^1$-link over $T'$.
		
		\item $(X,\Delta)$ is plt in a neighborhood of $\pi^{-1}(s)$.
		\item $\pi^{-1}(s)\cap\Nklt(X,\Delta)$ has 2 connected components. 
	\end{enumerate}
\end{theorem}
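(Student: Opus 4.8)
The plan is to reduce, via a dlt modification, to a $\Qq$-factorial dlt pair, and then to run a carefully chosen $(K_X+B)$-minimal model program that leaves the non-klt locus and its connected components unchanged over $s$, terminating on a Mori fiber space where Theorem~\ref{thm:shoconnkollar} applies directly.

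\textbf{Reduction.} First I would apply Lemma~\ref{lem:dlt} to replace $(X,\Delta)$ by a dlt modification $f\colon X'\to X$; by Lemma~\ref{lem:crepant} the connected components of $\Nklt(X',\Delta')$ are in bijection with those of $\Nklt(X,\Delta)$, and since $f$ has connected fibers the hypotheses over $s$ are preserved after shrinking $S$ to a neighborhood of $s$. I may thus assume $X$ is $\Qq$-factorial, $(X,B+\red C)$ is dlt with $B=\{\Delta\}$ and $C=\lfloor\Delta\rfloor$, and $\Nklt(X,\Delta)=\Supp C$. Setting $P:=-(K_X+\Delta)$, which is $\pi$-nef, I obtain $-(K_X+B)=C+P$ with $C$ effective, $P$ nef, and $(X,B)$ klt — exactly the setting of Lemma~\ref{lem:extremal}, which furnishes $\beta>0$ such that every extremal ray $R$ with $(K_X+B+\beta P)\cdot R<0$ satisfies $P\cdot R=0$.

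\textbf{The special MMP.} I would run a $(K_X+B+\beta P)$-MMP over $S$ (with scaling of an ample divisor). The role of $\beta$ is that every contracted ray $R$ is $P$-trivial, with three consequences: (i) $R$ is $(K_X+B)$-negative, so this is a klt MMP whose flips exist, and $(X,B)$ stays klt with $P$ nef; (ii) $(K_X+\Delta)\cdot R=-P\cdot R=0$, so each step is crepant for $(X,\Delta)$, and applying Lemma~\ref{lem:crepant} to the two birational contractions of each flipping or divisorial step shows the number of connected components of $\Nklt(X,\Delta)$ over $s$ is unchanged; (iii) $C\cdot R=-(K_X+B)\cdot R>0$, so every flip is $C$-positive. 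For termination I invoke the standing hypothesis in general; this $C$-positivity is precisely what makes Theorem~\ref{AHK0631} applicable with $E=C$ in dimension $\le 4$, which is how the dimension-$\le 4$ case of Theorem~\ref{thm:shoconn} is obtained.

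\textbf{Endpoint and descent.} The MMP terminates on a model $Y/S$ on which the non-klt locus over $s$ is still disconnected. The crux is that it must end on a Mori fiber space $g\colon Y\to Z$ over $S$ rather than on a minimal model. On such a Mori fiber space the contracting ray is $P$-trivial, so $P_Y\sim_{\Rr,g}0$ and hence $K_Y+B_Y+C_Y=K_Y+\Delta_Y\sim_{\Rr,g}0$, which is exactly the input of Theorem~\ref{thm:shoconnkollar} for the klt pair $(Y,B_Y)$, the effective divisor $C_Y$, and $g$, at the point $z\in Z$ lying under the disconnected fiber. That theorem yields, over an étale neighborhood, that $(Y,\Delta_Y)$ is birational to a standard $\Pp^1$-link, is plt near the fiber, and has non-klt locus with exactly two components there. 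Transporting these back along the crepant MMP and the dlt modification — plt-ness and the count of components via Lemma~\ref{lem:crepant}, and the $\Pp^1$-link structure because all intermediate maps are birational and crepant and étale base change commutes with the construction — gives the three assertions for $(X,\Delta)$.

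\textbf{Main obstacle.} I expect the hard part to be the endpoint step: showing the special MMP terminates on a \emph{Mori fiber space} with $-(K+\Delta)$ relatively trivial, i.e.\ excluding the minimal-model outcome using the disconnectedness of $\Nklt$ (heuristically, a minimal model would keep $-(K+\Delta)$ nef while forcing, through the connectedness lemma and the structure of the $P$-trivial contractions, the non-klt locus to be connected), together with the bookkeeping needed to match the base point $s\in S$ with the point $z\in Z$ at which Theorem~\ref{thm:shoconnkollar} is invoked. The verification in (ii) that the crepant steps genuinely preserve the component count \emph{over the fiber} $s$, and not merely globally, also requires care.
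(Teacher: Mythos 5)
Your skeleton matches the paper's proof: dlt modification (Lemma \ref{lem:dlt}), a $P$-trivial klt MMP built from Lemma \ref{lem:extremal} whose steps are crepant for the non-klt pair (so components are preserved by Lemma \ref{lem:crepant}, and flips are $C$-positive, giving the dimension $\leq 4$ case via Theorem \ref{AHK0631}), termination on a Mori fiber space, and then Theorem \ref{thm:shoconnkollar}. However, the step you yourself flag as ``the main obstacle'' --- excluding the minimal-model outcome --- is a genuine gap, and it cannot be closed with the MMP you propose. Running the MMP for the \emph{fixed} divisor $K_X+B+\beta P$ with $P=-(K_X+\Delta)$ and $\beta$ fixed once and for all, you may stop (possibly at the very first step) at a model on which $K+B+\beta P=(\beta-1)P-C$ is nef over $S$. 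The disconnectedness hypothesis only forces $-C$ to be non-nef over $S$ (otherwise the connected fiber $\pi^{-1}(s)$ would lie inside $\Supp C$, contradicting disconnectedness of $\pi^{-1}(s)\cap\Supp C$); it says nothing about $(\beta-1)P-C$, which is nef whenever $P$ dominates $\frac{1}{\beta-1}C$. At such a minimal model your argument stalls: your heuristic via the connectedness lemma fails because $-(K+\Delta)$ is nef but not relatively big there. A secondary issue is that the constant $\beta$ of Lemma \ref{lem:extremal} is model-dependent (its proof bounds intersection numbers with extremal curves on the given variety), so it must be re-chosen after each step; keeping one $\beta$ does not guarantee that later contracted rays are $P$-trivial.

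The paper's ``special MMP'' is designed precisely to kill this obstacle, and this is its main new idea. At each model $X_i$ one sets $\mu_i:=\max\{\mu \mid \phi_*P-\mu\,\phi_*C \text{ is nef over } S\}$ --- finite exactly because $-\phi_*C$ is not nef over $S$, which the disconnectedness hypothesis guarantees at every step --- and then takes $P_i:=\phi_*(P-\mu_i C)$, $C_i:=(1+\mu_i)\phi_*C$, with a new $\beta_i$ from Lemma \ref{lem:extremal} applied on $X_i$. The maximality of $\mu_i$ forces
$$K_{X_i}+B_i+\beta_i P_i=(\beta_i-1)\phi_*P-(\beta_i\mu_i+1)\phi_*C$$
to be non-nef over $S$, since $\frac{\beta_i\mu_i+1}{\beta_i-1}>\mu_i$; hence there is always a $P_i$-trivial, $(K+B)$-negative extremal ray to contract, and, granted termination of flips, the only possible endpoint is a Mori fiber space. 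Without this self-correcting redefinition of $(P_i,C_i,\beta_i)$ the argument does not go through. Note also that the endpoint analysis is more delicate than ``apply Theorem \ref{thm:shoconnkollar} at the point below the bad fiber'': the paper first splits into $\mu_n>0$ (which contradicts disconnectedness via the Shokurov--Koll\'ar connectedness lemma, since then $-(K_Y+B_Y+\psi_*C)$ is relatively ample) and $\mu_n=0$ (in which case the whole MMP is crepant for the original pair), and in the latter case it needs the geometric analysis of the Mori fiber space --- $C_Y$ relatively ample, two disjoint horizontal components which are sections, shrinking $S$ so that $C_Y=D_1+D_2$ and \emph{every} fiber over $\rho^{-1}(s)$ has disconnected intersection with $C_Y$ --- both to apply Theorem \ref{thm:shoconnkollar} at every $z\in\rho^{-1}(s)$ (yielding plt-ness near all of $\gamma^{-1}(\rho^{-1}(s))$) and to descend statement (1) to an \'etale neighborhood of $s\in S$ rather than of a single point of $Z$.
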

\begin{proof}
	By Lemma \ref{lem:dlt}, we may take a dlt modification of $(X,\Delta)$, $\varphi: X_1\to X$, so that $X_1$ is $\Qq$-factorial, $K_{X_1}+B+C=\varphi^{*}(K_X+\Delta)$, $C=\lfloor B+C\rfloor$ and $(X_1,B+\red C)$ is dlt. By Lemma \ref{lem:crepant}, there is a bijection between the connected components of $\Nklt(X,\Delta)$ and $\Nklt(X_1,B+C)=\Supp C$. In particular, $\pi_1^{-1}(s)\cap \Supp C$ is disconnected, where $\pi_1:=\pi \circ \varphi$.  Arguing as in \cite[Proposition 4.37]{KollarMMP13}, we may assume that different components of $\pi_1^{-1}(s)\cap \Supp C$ belong to different components of $\Supp C$.

	If $-C$ is $\pi_1$-nef, then we claim that $\pi_1^{-1}(s)\subseteq \Supp C$, which is a contradiction. To see the claim, note that otherwise we can choose a curve $\Gamma \subset \pi_1^{-1}(s)$ such that $\Gamma  \not\subset \Supp C$ and $\Gamma \cap \Supp C\neq \emptyset$. But then $-C\cdot\Gamma <0$ which is impossible.

	Therefore we may assume that $-C$ is not $\pi_1$-nef. Let $B_1:=B$, $P:=-(K_{X_1}+B+C)$,
	$$\mu_1:=\max\{\mu|P-\mu C\text{ is }\pi_1 \text{-nef}\}\in [0,+\infty ),$$
	$P_1:=P-\mu_1 C$, and $C_1:=(1+\mu_1)C$. 
	
	Choose $\beta_1>1$ as in Lemma \ref{lem:extremal}, then
	$$K_{X_1}+B_1+\beta_1P_1=(\beta_1-1)P-(\beta_1\mu_1+1)C$$
	is not nef, since $\frac{\beta_1\mu_1+1}{\beta_1-1}>\mu_1$.
	
	We run a special kind of $(K_{X_1}+B_1)$-MMP over $S$ as follows. Let $R$ be a $(K_{X_1}+B_1+\beta_1P_1)$-negative extremal ray over $S$ such that $P_1\cdot R=0$. It is also a $(K_{X_1}+B_1)$-negative extremal ray over $S$. By the cone theorem, $R$ can be contracted. If we get a Mori fiber space, $X_1\to Z$, then we stop. Otherwise, we get a birational map, $\phi:X_1\dashrightarrow X_2$. By the cone theorem, $\phi_{*}P_1$ is nef and $(X_2,\phi_{*}B_1)$ is klt. Let $p:W\to X_1$ and $q:W\to X_{2}$ be a common resolution. Since $K_{X_1}+B_1+C_1=-P_1$ is $\phi$-trivial, according to the negativity lemma, $$p^{*}(K_{X_1}+B_1+C_1)=q^{*}(K_{X_{2}}+\phi_{*}(B_1+C_1)).$$ By Lemma \ref{lem:crepant}, there is a bijection between the connected components of $\Nklt(X_1,B_1+C_1)=\Supp C_1$ and $\Nklt(X_{2},\phi_{*}(B_1+C_1))=\Supp (\phi_{*}C_1)$. If $-\phi_{*}C_1$ is $\pi_{2}$-nef over $S$, where $\pi_2: X_2\to S$ is the induced morphism, then $\pi_2^{-1}(s)\subseteq \Supp \phi_{*}C_1$, a contradiction. Therefore $-\phi_{*}C_1$ is not $\pi_{2}$-nef over $S$ and we can repeat the above process, namely, let $B_2:=\phi_{*}B_1$,
	$$\mu_2:=\max\{\mu|\phi_{*}P-\mu \phi_{*}C\text{ is } \pi_2 \text{-nef}\}\in [\mu_1, +\infty),$$
	$P_2:=\phi_{*}(P-\mu_2 C)$, $C_2:=(1+\mu_2)\phi_{*}C$. Choose $\beta_2>1$ as in Lemma \ref{lem:extremal}. Arguing as above, one sees that $(K_{X_2}+B_2+\beta_2P_2)$ is not nef and we may contract a $(K_{X_2}+B_2+\beta_2P_2)$-negative extremal ray over $S$. Proceeding this way, we obtain a sequence of the $(K_{X_1}+B_1)$-MMP over $S$.
	
	Since we are assuming the termination of flips, this MMP $\psi:X_1\dashrightarrow X_n$ terminates with a Mori fiber space $X_n\to Z$. For simplicity, we let $Y:=X_n$, $$B_Y:=B_{n}=\psi _*B,\, C_Y:=C_{n}=(1+\mu _n)\psi _* C,\, P_Y:=P_n=\psi _*(P-\mu _nC).$$ We remark that since $\gamma:Y\to Z$ is a Mori fiber space, its fibers are connected, and the fiber $\rho^{-1}(s)$ is connected, where $\rho:Z\to S$ is the natural induced morphism. Now, $-(K_Y+B_Y)=P_Y+C_Y$ is ample over $Z$, or equivalently, $C_Y$ is ample over $Z$. This implies that $C_Y$ intersects every fiber of $\gamma$. As observed  above, there is a bijection between the connected components of $\Nklt(X_1,B_1+C_1)=\Supp C_1$ and $\Nklt(Y,B_Y+C_Y)=\Supp C_Y$, and $\gamma^{-1}(\rho^{-1}(s))\cap \Supp C_Y$ is disconnected.
	
	$$\xymatrix@=2.5em{
		X_1\ar@{-->}[rr]^{\psi} \ar[ddr]^{\pi}  &          &  Y\ar[d]^{\gamma}\\
		&         &  Z\ar[dl]^{\rho}\\
		& S      &
	}
	$$
	
	If $\mu_n>0$, then $-(K_Y+B_Y+\psi_{*}C)=(P_Y+\mu_n\psi_{*}C)$ is ample over $Z$. By the Shokurov-Koll\'{a}r connectedness lemma, $\Nklt(Y,B_Y+\psi_{*}C)=\Supp C_Y$ is connected in a neighbourhood of any fiber of $\gamma$. Hence,
	$$\gamma^{-1}(\rho^{-1}(s))\cap \Nklt(Y,B_Y+\psi_{*}C)=\gamma^{-1}(\rho^{-1}(s))\cap \Supp C_Y$$
	is connected, a contradiction.
	
	If $\mu_n=0$, then $\mu_1=0$ and $C_Y=\psi_{*}C$ since $\mu_n\ge\mu_{n-1}\ge\cdots\ge\mu_1\ge0$. The MMP is $P$-trivial and hence $(K_X+B+C)$-trivial, and so $K_Y+B_Y+C_Y$ is numerically trivial over $Z$. 
	
	Since
	$$\gamma^{-1}(\rho^{-1}(s))\cap\Nklt(Y,B_Y+C_Y)=\gamma^{-1}(\rho^{-1}(s))\cap\Supp C_Y$$
	is disconnected and $\rho^{-1}(s)$ is connected, 
	there exists $z_0\in \rho^{-1}(s)$, such that $\gamma^{-1}(z_0)\cap \Supp C_Y$ is disconnected.
	
	Since $C_Y$ is ample over $Z$, there is an irreducible component, say $D_1\subset C_Y$ that has positive intersection with the contracted ray. Since $\rho (Y/Z)=1$, if a vertical divisor intersects  a fiber, then this divisor must contain the fiber. Since $\gamma^{-1}(z_0)\cap \Supp C_Y$ is disconnected, there exists another prime divisor, say $D_2\subset C_Y$ that intersects with the fiber $\gamma^{-1}(z_0)$ and is disjoint from $D_1$ in $\gamma^{-1}(z_0)$. Thus, $D_2$ also has positive intersection with the contracted ray, and is ample over $Z$. $D_1$ and $D_2$ both intersect every curve contracted by $\gamma$ and they are disjoint in $\gamma^{-1}(z_0)$. Hence, a general fiber $F_g$ of $\gamma$ is a smooth rational curve. Now, $(F_g\cdot(B_Y+C_Y))=-\deg K_{F_g}=2$. Possibly shrinking $S$, we see that $D_1$ and $D_2$ are sections of $\gamma$, they both appear in $C_Y$ with coefficient 1 and $B_Y+C_Y-D_1-D_2$ consists of vertical divisors, and the vertical components of $C_Y$ are disjoint from $\gamma^{-1}(\rho^{-1}(s))$. By shrinking $S$ again, we may assume that $C_Y=D_1+D_2$, and for any $z\in \rho^{-1}(s)$, $\gamma^{-1}(z)$ is disconnected. 
	
	
	By applying Theorem \ref{thm:shoconnkollar} to any $z\in \rho^{-1}(s)$, we get Theorem \ref{thm:shoconnstrong}(1), $(Y,B_Y+C_Y)$ is plt in a neighborhood of $\gamma^{-1}(\rho^{-1}(s))$, and $\pi_{1}^{-1}(s)\cap\Supp C_Y$ has two connected components. Since $\psi:(X_1,B_1+C_1)\dashrightarrow (Y,B_Y+C_Y)$ is a crepant birational map, and none of the irreducible components of $\Supp C_Y$ are contracted by $\psi^{-1}$, $(X_1,B_1+C_1)$ is plt in a neighborhood of $\pi_{1}^{-1}(s)$ (cf. \cite[2.23.4]{KollarMMP13}). Therefore, $(X,\Delta)$ is also plt in a neighborhood of $\pi^{-1}(s)$.
\end{proof}

\section{Proof of the main results}
\begin{proof}[Proof of Theorem \ref{thm:shoconn}]
	Assuming the termination of klt flips, this follows from Theorem \ref{thm:shoconnstrong}.
	
	Assume now that $\dim X\leq 4$.
	We follow the notation in the proof of Theorem \ref{thm:shoconnstrong}. We only need to prove that the sequence of $(K_{X_1}+B_1)$-flips terminates. Since $(X_1,B_1)$ is klt, there exists $M>1$, such that $(X_1,B_1+\frac{C}{M})$ is klt. Each step of the MMP is $(P-\mu_iC)$-trivial for some $\mu_i\ge 0$, or equivalently $(K_{X_1}+B_1+(1+\mu_i)C)$-trivial. It follows that the sequence of the $(K_{X_1}+B_1)$-MMP is also a sequence of the $(K_{X_1}+B_1+\frac{C}{M})$-MMP, and all steps are $C$-positive. Hence, by Lemma \ref{AHK0631}, the sequence of flips terminates.
\end{proof}

\begin{proof}[Proof of Corollary \ref{cor:shoconn}]
	According to the proof of Theorem \ref{thm:shoconnstrong}, if $\mu_1>0$, then $\Nklt(X,\Delta)$ is connected in a neighbourhood of any fiber of $\pi$.   The nefness of $-(K_X+B+(1+\epsilon)C)$ implies $\mu_1>0$. 
\end{proof}
We remark that the result of Corollary \ref{cor:shoconn} without assuming the termination of flips implies the connectedness lemma. For a log pair $(X,\Delta)$ and a proper morphism $\pi: X\to S$, such that $-(K_X+\Delta)$ is $\pi$-nef and $\pi$-big. Possibly by taking a dlt modification, we may assume that $(X,B+\red C)$ is dlt, where $B=\{\Delta\}$ and $C=\lfloor \Delta\rfloor$. There exists an effective divisor $E$ and a $\pi$-ample divisor $A_k$, such that $-(K_X+\Delta)=A_k+\frac{E}{k}$ for $k\gg 1$. Let $\Delta_k:=\Delta+\frac{E}{k}$, $B_k:=\{\Delta_k\}$ and $C_k:=\lfloor \Delta_k\rfloor$. There exists $k\gg 1$, such that $(X,B_k+\red C_k)$ is dlt and $\Nklt(X,\Delta)=\Nklt(X,\Delta_k)$. Since $-(K_X+\Delta_k)$ is ample, $-(K_X+B_k+(1+\epsilon)C_k)$ is nef for some $\epsilon>0$. By Corollary \ref{cor:shoconn}, $\Nklt(X,\Delta_k)$ is connected in a neighbourhood of any fiber of $\pi$.

\begin{proof}[Proof of Theorem \ref{thm:soucelccenter}]
	This follows from Theorem \ref{thm:shoconnstrong} and the proof of Theorem 4.40 in \cite{KollarMMP13}.
\end{proof}

\begin{proof}[Proof of Corollary \ref{cor:soucelccenter}]
	This follows from  Theorem \ref{thm:soucelccenter} and the proof Corollary 4.41 in \cite{KollarMMP13}.
\end{proof}

It would be interesting to prove the results in this note without assuming the termination of flips, and to generalize these results to the setting of generalized polarized pairs.

\bibliographystyle{abbrv}

\end{document}